\newtheorem{theorem}{Theorem}
\newtheorem{remark}{Remark}
\def\linear{\underset{\mathrm{lin}}{\equiv} }
\def\law{*}
\newcommand{\cubic}{\mathcal{C}}
\newcommand{\quadric}{\mathcal{Q}}
\renewcommand{\line}{\mathcal{L}}
\newcommand{\binomiale}[2]{\left( \begin{array}{c} #1\\#2 \end{array} \right)}
\date{}
\begin{document}

\title{Point-groups over singular cubics}

\author{Emanuele Bellini\thanks{Technology Innovation Institute, UAE}, Nadir Murru\thanks{Universit\'a di Torino, Italy}, Antonio~J. Di Scala\thanks{Politecnico di Torino, Italy}, Michele Elia\thanks{Politecnico di Torino, Italy} 
 }

\maketitle

\thispagestyle{empty}

\begin{abstract}
\noindent
In this paper, we highlight that the point group structure of elliptic curves over finite or infinite fields, may be also observed on singular cubics with a quadratic component. Starting from this, we are able to introduce in a very general way a group's structure over any kind of conics.
In the case of conics over finite fields, we see that the point group is cyclic and lies on the quadric; the straight line component plays a role which may be not explicitly visible in the algebraic description of point composition, but it is indispensable in the geometric description. 
Moreover, some applications to cryptography are described, considering convenient parametrizations of the conics. Finally, we perform an evaluation of the complexity of the operations involved in the parametric groups and consequently in the cryptographic applications.
\end{abstract}

\paragraph{Keywords:} Rational functions, Finite Fields, Public key cryptography, Groups over curves.

\vspace{2mm}
\noindent
{\bf Mathematics Subject Classification (2010): 11F22, 11B50, 11F11}

\vspace{8mm}

\section{Introduction} \label{sect25}
Curves having a group's structure are classical and very important tools in cryptography. The main example is provided by elliptic curves over finite fields, whose use in cryptography was introduced, independently, by Koblitz \cite{Kob} and Miller \cite{Mil}. Moreover, curves with a group's structure, usually cubics or conics, can be exploited for constructing RSA--like schemes (see, e.g., \cite{nadir, Dem, Koy, Murty, Pad, Rao, Sarma, Segar}) for improving the performances in the decryption procedures and having also more security than RSA in some contexts, like broadcast scenarios. Many of these cryptosystems were studied exploiting the properties of the Pell's hyperbola that is the set of solutions in a field $\mathbb F$ of the famous Pell's equation $x^2 - D y^2 =1$, with $D \in \mathbb F^*$. The Pell's hyperbola and its group's structure have been widely studied not only for its cryptographic applications, but also for the natural interest that inspires, see, e.g., \cite{Bar} and \cite{Jac}. 

In this paper, we first focus on the group's structure of the Pell's hyperbola, highlighting its similarity with that of the elliptic curves. Starting from this, we are able to introduce in a very general way a group's structure over any kind of conics (section \ref{sec:group}). Then, we focus on conics defined over finite fields, studying their structure as cyclic groups. This allows to use them in cryptographic applications, especially considering convenient parametrizations (section \ref{sec:order}). Hence, we perform an evaluation of the complexity of the operations involved in the parametric groups, providing also an improvement of a specific algorithm (section \ref{sec:comp}).

\section{Group's structure of conics} \label{sec:group}

In the following, we will refer with the term \emph{product},
in symbols $\otimes$, 
for the operation between two points of a conic and 
we use the term \emph{addition}, 
in symbols $\oplus$, 
for the operation over elliptic curves.

Given two points $\mathbf A = (x,y)$ and $\mathbf B = (w,z)$ of the Pell's hyperbola in the affine plane, their product is obtained as
\begin{equation}
   \label{power}
\mathbf A \otimes \mathbf B = (xw+yz D, xz+yw)  ~~~\Leftrightarrow~~(xw+yz D)+(xz+yw)\sqrt{D}= (x+y\sqrt D)(w+z\sqrt D),
\end{equation}  
that is, from the product of elements in $\mathbb F(\sqrt D)$. This product is usually known as the Brahmagupta product and it can be also introduced in a geometric way \cite{Veb}. 
In fact, let $\mathfrak O=(1,0)$ be a fixed point, the product of two points $\mathbf A$ and $\mathbf B$  is defined as the intersection $\mathbf A \otimes \mathbf B$, with the Pell's hyperbola, of the line through $\mathfrak O$ which is  parallel to the line through $\mathbf A$ and $\mathbf B$. Let us note that $\mathfrak O$ plays the role of the identity.
This construction is the same that gives the operation between points of elliptic curves, even if it appears slightly different, as we will point out below.

Given two points $\mathbf A$ and $\mathbf B$ of an elliptic curve (of equation $y^2 z = x^3 + a x z^2 + b z^3$) in the projective plane, we consider the intersection $\mathbf C$, with the elliptic curve, of the line through $\mathbf A$ and $\mathbf B$. Then $\mathbf A \oplus \mathbf B$ is the symmetric of $\mathbf C$ with respect to the $x$--axis or, in other words, $\mathbf A \oplus \mathbf B$ is the intersection between the elliptic curve with the line through $\mathbf C$ and the identity, which is, in this case, the point at the infinity.

The above construction is the same given on the Pell hyperbola. Indeed, given two points $\mathbf A$ and $\mathbf B$, the line through them intersects the conic in a point $\mathbf C$ that can be only the point at the infinity. 
Then, considering the intersection between the conic with the line through $\mathbf C$ and the identity, which is in this case $\mathfrak O = (0, 1)$, we get $\mathbf A \otimes \mathbf B$. See Figure \ref{fig:1}.
\begin{figure} 
\centering
\resizebox{\textwidth}{!}{
\includegraphics{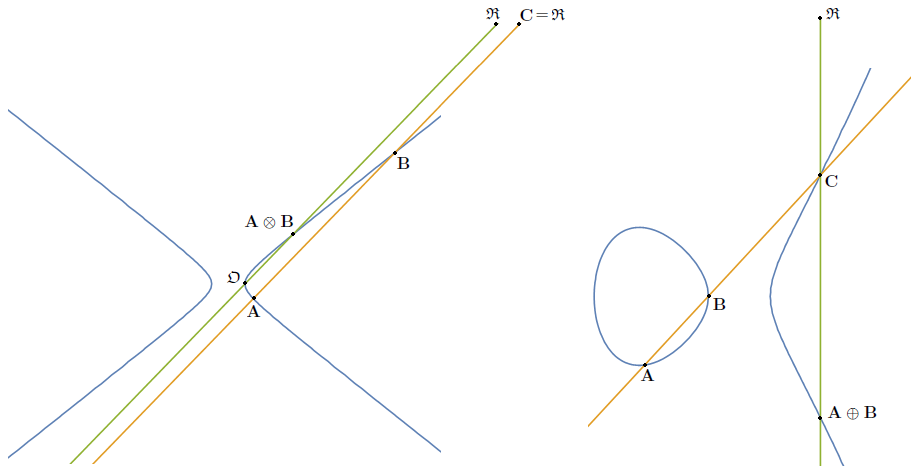}
}
\caption{On the left, the geometric construction of the product between two points of the Pell's hyperbola, where $\mathfrak R$ is the point at the inifnity; on the right the same construction on an elliptic curve.} 
\label{fig:1}
\end{figure}

As a matter of fact, this hyperbola point-group structure is simply another way to see the operation over a degenerated cubic with two components. Therefore, we can consider a more general situation where Pell's hyperbola is substituted by any quadric, i.e. hyperbola, ellipsis, or parabola, furthermore, as we will see, the identity point can be any point of the conic. 

Let $\cubic[\mathbb F]$ be a cubic having two components: a straight line $\line[\mathbb F]$ of equation $ax+by+c=0$,  and a non-degenerated quadric $\quadric[\mathbb F]$ of equation
$$  e x^2 + 2gxy+fy^2+dx+hy+k=0 ~~. $$
Fix a point $\mathfrak O=(\alpha,\beta)$ on $\quadric[\mathbb F]$, then the product of two points
$\mathbf A=(x,y)$ and $\mathbf B=(w,z)$ on the quadric is defined to be the point $\mathbf P=\mathbf A \otimes \mathbf B$ obtained as the intersection of the line through $\mathfrak O=(\alpha,\beta)$ and $\mathbf S$ which is given by the intersection of $\cubic[\mathbb F]$ with the line through $\mathbf A$ and $\mathbf B$. 
The geometric view immediately shows that the operation $\otimes$ is commutative with $\mathfrak O$ as the identity, as well as the existence of inverses. Indeed, the inverse of a point $\mathbf A$ can be obtained considering, firstly, the intersection $\mathbf T$ between the line tangent to the conic through $\mathfrak O$ and $\ell$, then $\mathbf A^{-1}$ is the intersection between the conic and the line through $\mathbf A$ and $\mathbf T$.
The associativity can be proved geometrically or algebraically. In this second instance, it is convenient
to refer to quadrics in canonical, or reduced, form:
$$      \begin{array}{lcl}
                 x^2 -Dy^2=\ell  &~~& \mbox{for the hyperbolas and ellipse, i.e., $g^2-ef \neq 0$,}  \\
                 y= e x^2 + k  &~~& \mbox{for the parabola, i.e. $g^2-ef = 0$.} 
          \end{array}
$$
The proof is indirect by showing first that we have a parametrization, and thus showing that the set of parameters admits of a group structure. 

\paragraph{Case $g^2-ef \neq 0$.} It is necessary and convenient to distinguish two further cases.

When $\ell=u^2$, let $m$ be a parameter, and rewrite the equation  $x^2 -Dy^2=u^2$ as follows
$$    
\frac{y}{x-u} = \frac{x+u}{Dy} =m   
$$
thus solving for $x$ and $y$ we have
$$  
x = 
\frac{u(D m^2+1)}{D m^2-1} ~~,~~ 
y = 
\frac{2 u m}{D m^2-1} ~~.   
$$
The product of the two points 
$\mathbf A=(x(m_A),y(m_A))$ and 
$\mathbf B=(x(m_B),y(m_B))$ 
is the new point 
$\mathbf P=(x(m_P),y(m_P))$ where
\begin{equation}
    \label{equ2}
   m_P = m_A \odot m_B =\frac{D m_A m_B+1}{(m_A+m_B)D} ~~. 
\end{equation}
Using this expression, the parameter characterizing the sum of three points $\mathbf A, \mathbf B, \mathbf C$ is
$$    \frac{D m_A m_B m_C+m_A+m_B+m_C}{(m_A m_B+m_B m_C +m_C m_A)D+1}  ~~,   $$
the symmetry proves that the product of points is associative.

When $\ell\neq u^2$, let $m$ be a parameter and $\mathfrak O =(\alpha,  \beta)$ be the fixed point on the conic
(which plays the role of group identity). Consider the line through $\mathfrak O$ with slope $m$,
that is of equation $y-\beta= m(x-\alpha)$, 
which meets the hyperbola in a second point $\mathbf B$ of coordinates
$$   
x=\alpha+ \frac{2(- \beta D m+\alpha)}{d m^2-1}   ~~,~~ y=\beta+ m \frac{2(-\beta D m+\alpha)}{D m^2-1}  ~~.    
$$
The product of the two points 
$\mathbf A=(x(m_A),y(m_A))$ and 
$\mathbf B=(x(m_B),y(m_B))$ is the point 
$\mathbf P=(x(m_P),y(m_P))$ where
\begin{equation}
    \label{eqnu2}
    m_P = m_A \odot m_B = \frac{(D m_A m_B+1) \alpha-(m_A+m_B) \beta D}{(-(D m_A m_B+1) \beta+(m_A+m_B) \alpha) D} ~~. 
\end{equation}
Again, the symmetry occurring in the product of three points proves that the product of points is associative.

\paragraph{Case $g^2-ef = 0$.}  Let $m$ be a parameter and $\mathfrak O =(\alpha,  \beta)$ be the fixed point on the parabola, the second intersection of the line through $\mathfrak O$ is a point of coordinates
$$  
x = \frac{-\alpha e+m}{e}  ~~,~~ y =\frac{\alpha^2 e^2-2\alpha e m+e k+m^2}{e}    ~~.  
$$
The product of two points 
$\mathbf A=(x(m_A),y(m_A))$ and 
$\mathbf B=(x(m_B),y(m_B))$ 
is the point 
$\mathbf P=(x(m_P),y(m_P))$ where
\begin{equation}
    \label{equ0}
    m_P = m_A \odot m_B = -2 \alpha e+m_A+m_B.
\end{equation}
Again, the symmetry occurring in the product of three points proves that the product of points is associative.

We conclude this section observing that it is a well-known fact for complex algebraic geometers that smooth curves with a law group have genus $g=1$.
This is so because the law group allows to define a non vanishing vector field (just by left translations), i.e.,
the tangent bundle is trivial.
Thus the Euler characteristic $\chi = 2 - 2 g$ vanishes, 
hence $g=1$.
Any conic has genus $0$, so ones wonder what is going wrong. The point is that in this case our conics are affine curves.
That is to say, the argument with the Euler characteristic works for compact curves (so called projective curves).

It is also a common practice to introduce the law group on a plane cubic by using the classic secant-tangent
construction. That is to say, on smooth projective cubic we take a flex $\mathfrak R$ as neutral element and introduce the group law $\law$ by using the secant-tangent construction i.e. by using the linear series of divisors $|3 \mathfrak R|$:
\[ \mathbf{P} + \mathbf{Q} + \mathbf{P}^{-1} \law \mathbf{Q}^{-1} \linear 3 \mathfrak R  \]
The converse of this procedure is not widely known. Namely,

\begin{theorem} 
Let $\mathcal C$ be a complete smooth curve which is also an algebraic group $(\mathcal C,\law)$.
Then $\mathcal C$ has genus $g=1$ and $(\mathcal C,\law)$ is abelian. Moreover, for arbitrary $\mathbf{P}, \mathbf{Q} \in \mathcal{C}$:

\[ \mathbf{P} + \mathbf{Q} + \mathbf{P}^{-1} \law \mathbf{Q}^{-1} \linear 3 \mathfrak R  \]

where $\mathfrak R$ is the neutral element of $\law$ and $\mathbf{P}^{-1}$ the inverse.
Thus $3 \mathfrak R$ is a very ample divisor and the law group $\law$ comes
from the classical secant-tangent construction by
using the embedding into $\mathbb{P}^2$ given by the complete linear series $|3 \mathfrak R|$.
\end{theorem}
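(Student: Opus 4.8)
The plan is to establish the three assertions in turn—genus one, commutativity, and the linear equivalence—using standard facts about algebraic groups and complete curves, and then to read off the final statement about $|3\mathfrak R|$ as a formal consequence.

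First I would prove $g=1$ exactly as sketched in the preceding paragraph of the excerpt: since $(\mathcal C,\law)$ is an algebraic group, left translations $L_{\mathbf g}\colon \mathcal C\to\mathcal C$ are automorphisms, so the tangent bundle $T_{\mathcal C}$ is trivial (translate a single nonzero tangent vector at $\mathfrak R$ around the curve to get a nowhere-vanishing global vector field). Hence $\deg T_{\mathcal C}=\chi(\mathcal C)=2-2g=0$, giving $g=1$. Next, commutativity: a connected algebraic group whose underlying variety is complete is abelian—this is the classical rigidity lemma argument (any morphism $\mathcal C\times\mathcal C\to\mathcal C$ that is constant on $\mathcal C\times\{\mathfrak R\}$ and on $\{\mathfrak R\}\times\mathcal C$ is constant; apply it to the commutator map $(\mathbf P,\mathbf Q)\mapsto \mathbf P\law \mathbf Q\law \mathbf P^{-1}\law \mathbf Q^{-1}$), so $(\mathcal C,\law)$ is an abelian variety of dimension one, i.e.\ an elliptic curve.

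The heart of the matter is the linear equivalence $\mathbf P+\mathbf Q+\mathbf P^{-1}\law\mathbf Q^{-1}\linear 3\mathfrak R$ as divisors of degree $3$ on $\mathcal C$. The key input is the standard description of linear equivalence on a genus-one curve via the group law: the map $\mathrm{Pic}^0(\mathcal C)\to \mathcal C$ sending the class of $\mathbf P-\mathfrak R$ to $\mathbf P$ is a group isomorphism, so for points $\mathbf P_1,\dots,\mathbf P_n,\mathbf Q_1,\dots,\mathbf Q_n$ one has $\sum \mathbf P_i\linear \sum \mathbf Q_i$ if and only if $\mathbf P_1\law\cdots\law\mathbf P_n=\mathbf Q_1\law\cdots\law\mathbf Q_n$ in $(\mathcal C,\law)$. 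Applying this to the two divisors at hand, the equivalence $\mathbf P+\mathbf Q+\mathbf P^{-1}\law\mathbf Q^{-1}\linear 3\mathfrak R$ reduces to the identity $\mathbf P\law\mathbf Q\law(\mathbf P^{-1}\law\mathbf Q^{-1})=\mathfrak R$ in the group, which is immediate since $\mathbf P^{-1}\law\mathbf Q^{-1}=(\mathbf Q\law\mathbf P)^{-1}=(\mathbf P\law\mathbf Q)^{-1}$ by commutativity, so the product telescopes to $\mathfrak R$. I expect the only delicate point here to be citing (or briefly justifying) the isomorphism $\mathrm{Pic}^0(\mathcal C)\cong \mathcal C$ and the fact that the abstract group law $\law$ coincides with the one induced from $\mathrm{Pic}^0$—this follows because both make $\mathbf P\mapsto$ (class of $\mathbf P-\mathfrak R$) a homomorphism fixing $\mathfrak R$, and a morphism of algebraic groups that is the identity on points is the identity.

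Finally, for the last sentence: $3\mathfrak R$ has degree $3\ge 3=2g+1$ on a genus-one curve, hence is very ample by the classical criterion, and $\ell(3\mathfrak R)=3$ by Riemann–Roch. The complete linear series $|3\mathfrak R|$ thus embeds $\mathcal C$ as a smooth plane cubic with $\mathfrak R$ mapping to a flex (since $3\mathfrak R\linear 3\mathfrak R$ means the tangent at $\mathfrak R$ meets $\mathcal C$ there with multiplicity $3$). Under this embedding the equivalence just proved says precisely that $\mathbf P,\mathbf Q,\mathbf P^{-1}\law\mathbf Q^{-1}$ are collinear, which is the defining relation of the secant–tangent group law with neutral element the flex $\mathfrak R$; comparing neutral elements and using that a group law on a fixed curve is determined by its neutral element (again rigidity) identifies $\law$ with the secant–tangent operation. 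The main obstacle, such as it is, is purely expository: assembling the rigidity lemma and the $\mathrm{Pic}^0$–dictionary cleanly enough that the two group laws are seen to agree rather than merely to look alike.
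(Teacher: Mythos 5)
Your proposal is correct and follows essentially the same route as the paper: triviality of the (co)tangent bundle for $g=1$, rigidity/completeness for commutativity, the identification of linear equivalence of degree-zero divisors with the group law (Abel--Jacobi) to get $\mathbf{P} + \mathbf{Q} + \mathbf{P}^{-1} \law \mathbf{Q}^{-1} \linear 3\mathfrak R$, and $\deg = 3 = 2g+1$ for very ampleness of $3\mathfrak R$. The only difference is presentational: where you spell out the rigidity lemma and the $\mathrm{Pic}^0$ dictionary, the paper simply cites Theorems 2 and 5 of Debarre's notes and derives the relation in two steps via $\mathbf{P} + \mathbf{Q} \linear \mathfrak R + \mathbf{P}\law\mathbf{Q}$.
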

\begin{proof}
Here we intend a complete curve as in \cite[Chapter IV]{Har}.
Denote by $\mathbf{P} \law \mathbf{Q}$ the law operation and by $\mathbf{P}^{-1}$ the inversion of $\mathbf{P}$.
As we explained above, $g = 1$, 
because using left multiplication 
we get a never vanishing differential
$\mathrm{d}z$, i.e. 
the canonical bundle=cotangent bundle is trivial. 
So any other differential, say $\alpha,$ is a multiple of $\mathrm{d}z$ : 
$$
\alpha = f\mathrm{d}z
$$
where $f$ is a regular function hence constant since $\mathcal C$ is complete. Thus $g=1$.

That $\law$ is abelian is a special case of \cite[Theorem 2]{De}.
By putting $\mathbf{D} = \mathbf{P} \law \mathbf{Q}$ in \cite[Theorem 5]{De}
we get \[ \mathbf{P} + \mathbf{Q} \underset{\text{lin}}{\equiv}  \mathfrak R + \mathbf{P} \law \mathbf{Q} \]
where $\mathfrak R$ is the neutral element of $\law$. Since this is true for arbitrary points we get:
\[ \mathbf{P} \law \mathbf{Q} + \mathbf{P}^{-1} \law \mathbf{Q}^{-1}  \linear 2 \mathfrak R\, \]
hence for arbitrary $\mathbf{P}, \mathbf{Q}$ we get \[ \mathbf{P} + \mathbf{Q} + \mathbf{P}^{-1} \law \mathbf{Q}^{-1} \linear 3 \mathfrak R  \]

So the linear series $|3\mathfrak R|$ is a complete $g^2_3$, i.e. $2 = \mathrm{dim}|3\mathfrak R|, 3=\mathrm{deg}(\mathbf{D})$,
and it is very ample as follows from \cite[pag. 307]{Har}.
Then $\mathcal{C}$ is embedded in $\mathbb{P}^2$ as a cubic by $|3\mathfrak R|$. The equation \[ \mathbf{P} + \mathbf{Q} + \mathbf{P}^{-1} \law \mathbf{Q}^{-1} \linear 3 \mathfrak R  \]
tells us that the law group $\law$ is given by the secant-tangent construction.
\end{proof}

\begin{remark}
The binary operation $*$ on the conic can be extended to all pairs ${\mathbf{P},\mathbf{Q}}$ of the
singular projective cubic with exclusion of the pair $\{ \mathbf S_1, \mathbf S_2 \}$ of two singular points intersection of the conic with the line at infinity.
Here is the explicit formula 
\[ [x:y:u] * [w:z:v] = [xw + yzD : xz+yw : uv ]. \]
\end{remark}

\section{Group and group order} \label{sec:order}
When $\mathbb F$ is a finite field, the point product defines a finite group on the quadrics which depends on a single parameter, thus, it
is expected that these groups are cyclic. This is the case is proved along with the determination of group order.  

Let $q=p^m$ be an odd prime power, and consider the curve $x^2-Dy^2-z^2=0$ in the projective plane. The points at infinity have
coordinates $(\pm\sqrt D,1,0)$, where  $\sqrt D$ belongs to $\mathbb F_q$ if $D$ is a square
in the field, otherwise it belongs to the extension field $\mathbb F_{q^2}$, i.e., it is not an 
$\mathbb F_q$-point. Considering the Pell's equation written as $Dy^2=x^2-1$, and rising both sides
to the power exponent $\frac{q-1}{2}$ we have
$$   (x^2-1)^{\frac{q-1}{2}}= \left\{ \begin{array}{l}
       1 ~~~\mbox{if $D$ is a square and $y \neq 0$}  \\
       -1~~~\mbox{if $D$ is not a square and $y \neq 0$}
     \end{array}  \right.
$$
therefore:
\begin{itemize}
   \item $D$ not square: since $\left( (x^2-1)^{\frac{q-1}{2}}\right)^{q+1}=1$ if $x^2-1 \neq 0$, the total number of points on the curve is $q+1$ including the two points $(\pm 1,0)$
   \item $D$ square: since $(x^2-1)^{\frac{q-1}{2}}=1$ if $x^2-1 \neq 0$, the total number of points is
     $q-1$  including the two points $(\pm 1,0)$, but in this case we must also count the two points
   at infinity, that in projective coordinate are  $(\pm \sqrt D,1,0)$, hence in total the group has still order $q+1$.
\end{itemize}

\noindent
This count also holds for any square $\ell$, in which case the coordinates $x$ and $y$ are changed by the same factor $u$, with $u^2=\ell$.

\noindent
If $\ell$ is not a square in the field, the set of solutions can be obtained as the product 
$$ (u+ \sqrt{D} v)  (x_o+ \sqrt{D} y_o)    $$
where   $u+ \sqrt{D} v$ is any solution of the Pell equation and $x_o+ \sqrt{D} y_o$ is a fixed solution
 of  the equation $ x^2 -Dy^2=\ell$.

The group of the parabola of equation $ y= e x^2 + k$ is cyclic of order $q+1$. Clearly the equation identifies $q$ points, the further point is the point at infinity, 
which is characterized by the homogeneous equation 
$ y z = e x^2 + k z^2$.
By setting $z=0$, we obtain $x=0$, 
thus the point at infinity has homogeneous coordinates 
$(0,1,0)$.

In many algorithms for cryptographic applications, the use of the arithmetic of algebraic curves
typically requires the evaluation of a multiple $n \mathbf{A}$ or a power $\mathbf{A}^n$ of a point on the curve with large $n$.
The use of quadrics does not avoid this computation, however the computational cost may be significantly smaller, although maintaining the same strength against cryptanalytic attacks. 
Furthermore, the point product is a complete operation, 
which means  that the formulas 
are defined for all pairs of input points on the quadric, with 
no exceptions for doubling, for the neutral element, or for negatives, 
and the output is always a point on the curve
\cite{bernstein2007faster}.
In particular, in the case of quadrics, it appears convenient to perform the operations on the corresponding set of parameters, like, e.g., in \cite{nadir, Pad}, where the authors only focused on the Pell hyperbola, while here we have showed that it is possible to work on more general conics. We give a sketch of the RSA-like cryptosystem on general conics.

\begin{itemize}

\item \textbf{Key generation.}
\begin{itemize}
\item Take two large prime numbers $p$ and $q$ and evaluate$N = p q$
\item Take an integer $\epsilon$ coprime with $(p+1)(q+1)$ and evaluate $\delta \equiv \epsilon^{-1} \pmod{(p+1)(q+1)}$
\end{itemize}
The couple $(N, \epsilon)$ is the public key, the triple $(p, q, \delta)$ is the private key.

\item \textbf{Encryption.}\\
Let $M_x, M_y \in \mathbb Z_N^*$ be two plaintexts.
\begin{itemize}
\item Find the conic where $(M_x, M_y)$ lies (for instance, in the case of the Pell hyperbola, evaluate $D = (M_x^2 - 1) / M_y^2$ in $\mathbb Z_N$; $D$ identifies the Pell hyperbola where the point lies)
\item Find the parameter $m$ corresponding to $(M_x, M_y)$ (for instance, in the case of the Pell hyperbola $m = (1 + M_x) / M_y$ in $\mathbb Z_N$)
\item Evaluate $c = m^{\odot \epsilon} \pmod N$, where powers are evaluated with respect to $\odot$ that is the operation over the set of parameters (i.e., the operation described by equations \eqref{equ2}, \eqref{eqnu2}, \eqref{equ0}, depending on the type of conic).
\end{itemize}
The encrypted message is $c$.

\item \textbf{Decryption.}\\
Let $c$ be the ciphertext.
\begin{itemize}
\item Evaluate $c^{\odot \delta} \pmod N$ that returns $m$
\item Find the point corresponding to the parameter $m$, i.e., the plaintexts $(M_x, M_y)$ (for instance, in the case of the Pell hyperbola evaluate $(m^2 + D)/(m^2 - D)$ and $2 m / (m^2 - D)$ in $\mathbb Z_N$).
\end{itemize}

\end{itemize}

\begin{remark}
If we construct the previous cryptosystem using a parabola of equation $y = e x^2 + k$, with identity $(\alpha, \beta)$, then the cryptosystem appears to be weak, since the exponentiation with respect to the operation described by \eqref{equ0} has a closed form:
\[ m^{\odot \epsilon} = -(2 \epsilon -2) \alpha e + \epsilon \pmod N, \]
where $\epsilon, \alpha, e$ are known quantities. However, previously, we have studied the group's structure also for the parabola for the completeness of the discussion.
\end{remark}

\section{Complexity of the computations} \label{sec:comp}

In this section, we evaluate the complexity of the operations involved in cryptosystems constructed on conics, referring to the parametric representation and considering the equations (\ref{equ2}), (\ref{eqnu2}), and (\ref{equ0}). We study also the case given by the parabola just for completeness. The complexity is expressed
 in terms of number of arithmetical operations in $\mathbf F_q$, i.e., number of multiplications, additions, and divisions, or inversions, being well known that inversion in finite fields is an expensive operation. In particular, we will focus the attention on the operation over hyperbolas, since the main cryptosystems in the literature are developed on these curves. We will present a direct method of evaluation of the exponentiation over the set of the parameters for all the conics (subsection \ref{sub:direct}). Moreover, for the exponentiation over the set of parameters of hyperbolas, we evaluate the complexity of More's algorithm (subsection \ref{sub:more}) and of an improvement of it that we propose in subsection \ref{sub:morep}.

In the following, let 
$n=\sum_{i=0}^\ell b_i2^i$, 
with $b_i \in \{0,1\} \subset \mathbb N$, and 
$\ell=\lfloor \log_2 n\rfloor,$ 
be the binary representation of $n$, and 
denote with $w(n)=\sum_{i=0}^\ell b_i$ the number of symbols $1$ in the binary representation of $n$.
Three different algorithms, all exploiting the square-and-multiply method, for computing $\mathbf{A}^{\otimes n}$, with $\mathbf A$ a point of a conic, are described and compared. The computation scheme is the following 

\begin{itemize}
   \item[i)] Find the parameter value $m$ of $\mathbf{A}$
   \item[ii)] Compute $m^{\odot n}$ that is the parameter of $\mathbf{A}^{\otimes n}$
   \item[iii)] Find the coordinates of the resulting point $\mathbf{A}^{\otimes n}$
\end{itemize}

\noindent
Since the computations at steps i) and iii) are common to every method, they are not counted in this complexity evaluation. The product of a point with itself is called doubling.

\subsection{Direct Algorithm} \label{sub:direct}

The procedure computes and stores $\ell$ doublings, denoted by $x_j$, $j=1, \ldots , \ell$, of the initial $m$ using the equations (\ref{equ2}), (\ref{eqnu2}), and (\ref{equ0}), then iteratively evaluates the
 sum $\sum_{j=0}^\ell b_j x_j$ by means of the same equations.
For doing these computations it is convenient to write the equations (\ref{equ2}) and (\ref{eqnu2})
in a slightly different form
\begin{equation}
    \label{equ2bis}
   m_A \odot m_B =\frac{ m_A m_B+D^{-1}}{m_A+m_B} ~~,~~~~m^{\odot 2} =\frac{ m^2+D^{-1}}{m+m}  ~~,
\end{equation}
and
\begin{equation}
    \label{eqnu2bis}
    m_A \odot m_B = \frac{(m_A m_B+D^{-1})-(m_A+m_B)\frac{\beta}{\alpha}}{-( m_A m_B+D^{-1}) D\frac{\beta}{\alpha}+(m_A+m_B)} ~~, ~~m^{\odot 2}= \frac{(m^2+D^{-1})-(m+m)\frac{\beta}{\alpha}}{-( m^2+D^{-1}) D\frac{\beta}{\alpha}+(m+m)} ~~,
\end{equation}
respectively.
Once the sequence $x_j$ has been computed, the result is obtained by the following iterated sums

\begin{itemize}
   \item If $b_0 = 1$, set $y_0 = m$, otherwise $y_0=\infty$
   \item For $j$ from $1$ to $s$ do 
$$  \left\{ \begin{array}{lcl}    
          y_j=y_{j-1}\odot x_j  &~& \mbox{if ~ $b_j \neq 0$}  \\
          y_j=y_{j-1} & & otherwise  \\
        \end{array}  \right.   
$$
   \item Output $m^{\odot n}=y_s$
\end{itemize}

\subsection{More's algorithm} \label{sub:more}

The computation into the group of parameters may be performed using R\'edei polynomials, when the group operation is defined by the equation (\ref{equ2bis}), see \cite{lidl} for an overview on Rédei polynomials and see \cite{nadir2, nadir} for the connection with the product \eqref{equ2bis} that we will recall below. 

R\'edei polynomials are defined as follows:
$$\mathcal N_n(D, z) = \sum_{k=0}^{\lceil \frac{n}{2} \rceil} \binomiale{n}{2k} D^{k}z^{n-2k}~~,~~
    \mathcal D_n(D, z) = \sum_{k=0}^{\lceil \frac{n}{2} \rceil} \binomiale{n}{2k+1} D^{k}z^{n-2k-1} ~~, $$
and satisfy the linear recurrences
$$      \left\{  \begin{array}{lcl}
               \mathcal N_{n+1}(D, z) & = & z \mathcal N_n(D, z) + d \mathcal D_n(D, z)\\
               \mathcal D_{n+1}(D, z) & = &  \mathcal N_n(D, z) +  z \mathcal D_n(D, z)  \\
          \end{array}  \right.  
$$
or, equivalently, satisfy the homogeneous linear recurrence of order two
$$      x_{n+2}-2zx_{n+1}+(z^2-D) x_n=0~~, $$
with respective initial conditions 
$$ \left\{  \begin{array}{lcl} 
                x_0=\mathcal N_0(D,z)=z   &~,~&  x_1=\mathcal N_1(D,z)=z^2+D \\ 
                x_0= \mathcal D_0(D,z)=1   &~,~&  x_1=\mathcal D_1(D,z)=2z
              \end{array}   \right.
$$
The R\'edei rational function is the ratio  $Q_n(D,z)= \frac{\mathcal N_n(D, z)}{D_n(D, z)}$ and gives $z^{\odot n}$ (that is the powers of a parameter corresponding to a point of the Pell's hyperbola $x^2 - D y^2 = 1$). Both R\'edei polynomials and rational functions can be quickly evaluated via their recurrence relations.
Many properties can be deduced from the relation
$$  \left[ \begin{array}{ll}
                  \mathcal N_n(D, z)  &  D \mathcal D_n(D, z)   \\
                 \mathcal D_n(D, z)  &  \mathcal N_n(D, z)
           \end{array}  \right]  =
\left[ \begin{array}{ll}
                   z  &  D   \\
                   1  &  z
           \end{array}  \right]^n
$$
From this representation of R\'edei polynomials we immediately have the following relations for the  R\'edei rational functions
$$   Q_{n+m}(D,z) = Q_{n}(D,z) \odot Q_{m}(D,z)  ~~,~~ Q_{nm}(D,z)=Q_{n}(D,Q_{m}(D,z)) ~~.  $$

In \cite{more}, More proposed a fast algorithm for evaluating the R\'edei rational functions.
 Though, the algorithm uses $2$ inversions at each step. Precisely, in $\mathbb F_{q}$, the number of multiplications required for computing the inverse of an element is $O(\log_2 q)$. Therefore, the actual complexity of More's algorithm is $O(\log_2 n \cdot \log_2 q)$.
However, as shown below, the algorithm can be modified to avoid inversions at each step by using more multiplications, and using only one inversion before returning the result.

More's algorithm mimics the square-and-multiply algorithm for evaluating powers, and evaluates the R\'edei function $Q_n(x)$ of degree $n \geq 1$ with respect to $t(x)=x^2- a x- b$ (i.e., for a more general definition of Rédei functions). 
It consists of three steps:
\begin{itemize} 
   \item[A1] 
        Let $[b_0, \ldots, b_{\ell-1}]$ be the binary representation of $n \geq 1$ with $b_k \in \{0, 1\}$. \\
        Set $i \leftarrow \ell-1$ and $R(x) \leftarrow x$, compute $x+a$.
  \item[A2]   If $i < 0$, the algorithm terminates, with $R(x)$ as the answer.
  \item[A3]   Set $ R(x) \leftarrow \frac{R^2(x)+b}{2R(x)+a}$ 
 \begin{enumerate}
    \item[] If $b_i= 1$,  set  $R(x) \leftarrow \frac{xR(x)+b}{R(x)+x+a}$
    \item[] Set $i \leftarrow i-1$ and return to step A2. 
  \end{enumerate} 
\end{itemize}

The output of this procedure is $R(x)$ that is the Rédei rational function $Q_n(x)$, which coincides with $x^{\odot n}$ when the polynomial is $t(x) = x^2 - D$.
Noting that expression $2R(x)+a$ can be evaluated as $R(x)+R(x)+a$, each step requires 
 $1$ multiplication (i.e. $R^2(x)$), $2$ additions, and one division if $b_i=0$, while if  $b_i=1$ one further
multiplication, $2$ further additions, and one further division are required. Note that a division can be
done using one inversion and one multiplication.
In summary, the algorithm requires ~$w(n)+\ell$~ multiplications, this same number of inversions, and $2(w(n)+\ell)$ additions. 

In the following section an algorithm which uses only one inversion is described and its complexity estimated.

\subsection{Modified  More's algorithm} \label{sub:morep}

A way to avoid the division at every step is to consider $R(x)$, in the More's algorithm, as the ratio of two polynomials $\frac{A(x)}{B(x)}$ so that we can update the polynomials $A(x)$ and $B(x)$ at each step and only at the end perform the quotient.

\begin{itemize} 
   \item[A1] 
        Let $[b_0, \ldots, b_{\ell-1}]$ be the binary representation of $n \geq 1$ with $b_k \in \{0, 1\}$. \\
        Set $i \leftarrow \ell-1$, $A(x) \leftarrow x$ and $B(x) \leftarrow 1$, compute $x+a$.
  \item[A2]   If $i < 0$, the algorithm terminates, with $R(x) \leftarrow A(x) / B(x)$ as the answer.
  \item[A3]   Set $ A(x) \leftarrow A(x)^2 + b B(x)^2$ and $B(x) \leftarrow 2 A(x) + a B(x)$ 
 \begin{enumerate}
    \item[] If $b_i= 1$,  set  $A(x) \leftarrow x A(x) + b B(x)$ and $B(x) \leftarrow 2 A(x) + (x+a) B(x)$
    \item[] Set $i \leftarrow i-1$ and return to step A2. 
  \end{enumerate} 
\end{itemize}

Also in this case, the output is $R(x)$ that is the Rédei rational function $Q_n(x)$, which coincides with $x^{\odot n}$ when the polynomial is $t(x) = x^2 - D$. Let us note that, in the iterations, no inversion is needed, and only a final quotient is computed to provide the value of the R\'edei rational function.

This modified algorithm needs some additional multiplication and sum at each step, that is 
   \begin{enumerate}
      \item $5$ multiplications, $3$ additions, and zero inversion for $\ell$ steps:
      \item $3$ multiplications,  $3$ additions, and zero inversion for $w(n)$ steps.
  \end{enumerate}

In summary the total number of multiplications is $5\ell+3w(n)$, the number of additions is 
    $3\ell+3w(n)$, plus a final division.

\noindent
For the sake of comparison the complexities relative to the three algorithms are summarized in Table \ref{tab:1}. 
Note that an inversion costs about $\log_2 q$ multiplications.

\begin{table}[h]
\begin{center}
\begin{tabular}{|c |c|c|c|| c|c|c|| c|c|c|}  \hline
                &\multicolumn{3}{|c||}{Direct }&   \multicolumn{3}{c||}{More}     & \multicolumn{3}{c|}{Modified More} \\    \hline
                       &  P             & A              & I             &  P    & A  & I & P & A & I   \\    \hline
 (\ref{equ2})    &$2\ell+2w$&$2\ell+3w$&$\ell+w$ & $2\ell$ & $3\ell$ &$\ell$ &  $5\ell+3w$  & $3\ell+3w$ & $1$ \\  \hline
 (\ref{eqnu2})  &$4\ell+4w$&$4\ell+3w$&$\ell+w$ & -- &--  & --& --&-- & -- \\  \hline
 (\ref{equ0})      & --   & $2\ell+2w$  & -- &--    &-- &-- & --& --&-- \\  \hline
\end{tabular} 
\vspace{4mm}

P = \# products ~~~~A= \# additions ~~~~ I = \# inversions
\end{center}    
    \caption{Complexity of three algorithms to compute the exponentiation $m^{\odot n}=y_s$.}
    \label{tab:1}
\end{table}

\section*{Acknowledgment}
A.~J. Di Scala is member of GNSAGA of INdAM and of DISMA Dipartimento di Eccellenza MIUR 2018-2022.

\end{document}